\documentclass[11pt]{article}
\title{Rainbow matchings and algebras of sets\footnote{An extended abstract of this paper appeared in \emph{Eurocomb 2015} (\emph{Electronic Notes in Discrete Mathematics} 49:251--257, 2015).}}
\date{}
\author{Gabriel Nivasch\footnote{\texttt{gabrieln@ariel.ac.il}. Department of Computer Science, Ariel University, Ariel, Israel.} \and Eran Omri\footnote{\texttt{omrier@ariel.ac.il}. Department of Computer Science, Ariel University, Ariel, Israel.}}

 \addtolength{\oddsidemargin}{-1.5cm}
 \addtolength{\textwidth}{3cm}
 \addtolength{\topmargin}{-1.3cm}
 \addtolength{\textheight}{3cm}

\usepackage{amsthm, amsmath, amsfonts}
\usepackage{graphicx}

\newcommand{\eqv}[1]{\mathrel{\sim_{#1}}}
\newcommand{\Cleft}{C_{\mathrm{left}}}
\newcommand{\Cright}{C_{\mathrm{right}}}

\newtheorem{theorem}{Theorem}
\newtheorem{lemma}[theorem]{Lemma}
\newtheorem{observation}[theorem]{Observation}

\theoremstyle{definition}
\newtheorem{charging}{Charging Scheme}

\theoremstyle{remark}
\newtheorem{remark}{Remark}

\begin{document}
\maketitle

\begin{abstract}
Grinblat (2002) asks the following question in the context of algebras of sets: What is the smallest number $\mathfrak v = \mathfrak v(n)$ such that, if $A_1, \ldots, A_n$ are $n$ equivalence relations on a common finite ground set $X$, such that for each $i$ there are at least $\mathfrak v$ elements of $X$ that belong to $A_i$-equivalence classes of size larger than $1$, then $X$ has a rainbow matching---a set of $2n$ distinct elements $a_1, b_1, \ldots, a_n, b_n$, such that $a_i$ is $A_i$-equivalent to $b_i$ for each $i$?

Grinblat has shown that $\mathfrak v(n) \le 10n/3 + O(\sqrt{n})$. He asks whether $\mathfrak v(n) = 3n-2$ for all $n\ge 4$. In this paper we improve the upper bound (for all large enough $n$) to $\mathfrak v(n) \le 16n/5 + O(1)$.
\end{abstract}

\section{Introduction}
In this paper we attack a combinatorial problem previously raised and studied by Grinblat in~\cite{g02,g04,g_new}. Grinblat's question arose in the study of algebras of sets. Given a nonempty ground set $Y$, let $\mathcal P(Y)$ denote the power set of $Y$. An \emph{algebra (of sets)} on $Y$ is a nonempty family $A\subseteq \mathcal P(Y)$ such that: (1) if $M\in A$ then also $Y\setminus M \in A$; and (2) if $M_1,M_2\in A$ then also $M_1\cup M_2\in A$. Grinblat investigated necessary and sufficient conditions under which the union of at most countably many algebras on $Y$ equals $\mathcal P(Y)$.

In this context, Grinblat asks for the smallest integer $\mathfrak v = \mathfrak v(n)$ for which the following holds: ``Suppose $A_1, \ldots, A_n$ are $n$ algebras on $Y$, such that, for each $i$, there exists a collection of at least $\mathfrak v$ pairwise disjoint subsets of $Y$ that are \emph{not} in $A_i$. Then there exists a family $\{U_1, V_1, \ldots, U_n, V_n\}$ of $2n$ pairwise-disjoint subsets of $Y$ such that, for each $i$ and each $Q\subseteq Y$, if $Q$ contains one of $U_i$, $V_i$ and is disjoint from the other, then $Q\notin A_i$.''

Grinblat shows that this problem is equivalent to the combinatorial problem presented below. (The equivalence is quite straightforward if the ground set $Y$ is finite, but if $Y$ is infinite then the argument is more delicate. We refer the reader to~\cite{g_new} for more details.)

\subsection{The combinatorial problem}

The combinatorial problem that interests us is the following: Let $n$ be a positive integer. Let $X$ be a finite ground set, and let $A_1, \ldots, A_n$ be $n$ equivalence relations on $X$ (or equivalently, partitions of $X$ into subsets). If $a, b\in X$ are equivalent under $A_i$, then we say for short that $a, b$ are $i$-equivalent, and we write $a \eqv{i} b$. The $i$-equivalence class of an element $a\in X$ is given by $[a]_i = \{ b\in X : a \eqv{i} b\}$. The \emph{kernel} of $A_i$, denoted $K_i$, is defined as the set of elements of $X$ that are $i$-equivalent to some element other than themselves:
\begin{equation*}
K_i = \bigl\{a\in X : \bigl|[a]_i\bigr| > 1 \bigr\}.
\end{equation*}

(It will become evident that one can assume without loss of generality that all equivalence classes in each $A_i$ have size at most $3$.)

We shall call a set of $2n$ \emph{distinct} elements $a_1, b_1, \ldots, a_n, b_n \in X$ a \emph{rainbow matching} if $a_i \eqv{i} b_i$ for each $i$. (See e.g.~Glebov et al.~\cite{GSS} for the term.)

The problem is to find the smallest integer $\mathfrak v = \mathfrak v(n)$ such that, if $|K_i|\ge \mathfrak v$ for all $i$, then $A_1, \ldots, A_n$ have a rainbow matching.

Grinblat observed that $\mathfrak v(n) \ge 3n-2$: If we let all equivalence relations $A_i$ be identical, consisting of $n-1$ equivalence classes of size $3$, then they have no rainbow matching even though $|K_i| = 3n-3$.

Grinblat also showed that $\mathfrak v(3) = 9$. The lower bound $\mathfrak v(3) > 8$ is illustrated in Figure~\ref{fig_n_3}.

Grinblat proved in~\cite{g_new} that $\mathfrak v(n) \le \left\lceil 10n/3 +\sqrt{2n/3}\right\rceil$ (he previously announced a slightly weaker bound in~\cite{g04}). He asks whether $\mathfrak v(n) = 3n-2$ for all $n\ge 4$.

In this paper we improve the upper bound to $\mathfrak v(n) \le 16n/5 + O(1)$:

\begin{theorem}\label{thm_bd}
Let $A_1, \ldots, A_n$ be $n$ equivalence relations with kernels $K_1, \ldots, K_n$, respectively. Suppose $|K_i| \ge (3+1/5)n+c$ for each $i$, where $c$ is a large enough constant. Then $A_1, \ldots, A_n$ have a rainbow matching.
\end{theorem}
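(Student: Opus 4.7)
The approach is proof by contradiction, starting from a maximum partial rainbow matching. Let $M$ be a rainbow matching of maximum size $m$; after reindexing, $M$ matches the first $m$ relations, and set $U = \{a_1, b_1, \ldots, a_m, b_m\}$. Assume for contradiction that $m < n$ while $|K_i| \ge 16n/5 + c$ for every $i$; as noted in the excerpt, I may further assume every class of every $A_i$ has size at most $3$. The basic consequence of maximality is that, for every unmatched $i > m$, no two elements of $K_i \setminus U$ are $i$-equivalent, so every size-$2$ $i$-class puts at least one element into $U$ and every size-$3$ class puts at least two. Letting $p_i$ and $t_i$ count the size-$2$ and size-$3$ classes of $A_i$, one gets $|K_i \cap U| \ge p_i + 2 t_i$, and therefore $|K_i| = 2p_i + 3t_i \le 2|K_i \cap U| - t_i \le 4m$. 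This already yields the crude bound $\mathfrak v(n) \le 4n$, and forces $m \ge 4n/5 + \Omega(c)$ for any unmatched index to exist.

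To push the coefficient down to $16/5$, I would design a charging scheme in which each unmatched index $i > m$ is assigned a total charge proportional to $|K_i \cap U|$, and distributes that charge among the matched pairs $(a_j, b_j)$ that \emph{block} $A_i$. Each blockage is classified as type A, when $a_j$ and $b_j$ sit in a common $i$-class, or type B, when only one of $a_j, b_j$ lies in an $i$-class whose other elements include a witness in $K_i \setminus U$. The heart of the argument is a \emph{swap lemma}: whenever the total charge received by some pair $(a_j, b_j)$ exceeds a threshold $\tau$ calibrated to produce the $16/5$ coefficient, the matching $M$ can be augmented. Namely, one deletes $(a_j, b_j)$ from $M$, uses the freed vertices $a_j$ and $b_j$ to simultaneously build new pairs for two or more of the unmatched relations charging $(a_j, b_j)$, and rematches $A_j$ itself by a fresh pair entirely inside $K_j \setminus U$ — which exists comfortably because the lower bound on $m$ and the hypothesis on $|K_j|$ together give slack $|K_j \setminus U| \gtrsim 6n/5$.

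The principal obstacle will be executing the swap lemma with enough flexibility to realize the threshold $\tau$ forced by the coefficient $16/5$. The technical subpoints I foresee are: (i) ensuring that the simultaneously added new pairs are pairwise disjoint, which, since each of $a_j$ and $b_j$ can occur in at most one new pair, reduces to a small bipartite matching problem handled by a Hall-type condition; (ii) choosing the replacement $A_j$-pair so that its two elements do not coincide with the witnesses used by the new pairs covering the unmatched relations, for which the slack $|K_j \setminus U| \gtrsim 6n/5$ is essential; and (iii) a careful case analysis distinguishing size-$2$ from size-$3$ $i$-classes, since a triangle class $\{a_j, u, w\}$ with $w \in K_i \setminus U$ offers more augmentation routes than a size-$2$ class $\{a_j, w\}$, and exploiting these extra routes is precisely what drives the coefficient below Grinblat's $10/3$. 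Once the swap lemma is in hand, double-counting the total distributed charge against its per-pair upper bound $\tau$ will yield the contradiction that some $|K_i|$ is strictly less than $16n/5 + c$.
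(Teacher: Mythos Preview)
Your proposal has a genuine gap at the swap lemma, and this gap is exactly where the problem's difficulty lives. You assert that after deleting $(a_j,b_j)$ you can ``rematch $A_j$ itself by a fresh pair entirely inside $K_j\setminus U$'', justified by the slack $|K_j\setminus U|\gtrsim 6n/5$. But the size of $K_j\setminus U$ says nothing about whether two of its elements are $j$-equivalent. It is perfectly consistent with the hypotheses that every $j$-class meets $U$ and contributes exactly one element to $K_j\setminus U$; then $|K_j\setminus U|$ can be as large as $2m$ while containing no $j$-pair at all. (This is the same mechanism behind the lower bound $\mathfrak v(n)\ge 3n-2$.) Without a replacement $j$-pair, your swap deletes one matched pair and adds new pairs that consume $a_j$ and $b_j$, so the net gain is at most zero. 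Fixing this requires a chain of substitutions propagating through several matched indices, which your double-counting scheme does not provide.

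The paper's proof is structurally quite different from what you outline. It does \emph{not} work with a maximum partial matching and many unmatched indices; instead it uses induction on $n$, so only $A_1$ is unmatched, and then builds a specific auxiliary structure: a ``track'' of $t-1=\lfloor n/5\rfloor$ four-element components $C_i=\{a_i,b_i,c_i,d_i\}$ with $a_i\eqv{i-1}c_i$, $b_i\eqv{i-1}d_i$, obtained by iterating a pigeonhole argument (each left component absorbs at most four charges, forcing a right pair to receive four). This track is what supplies the missing flexibility you tried to get from ``rematching $A_j$'': it gives, for each $2\le i\le t$, a ready-made alternative representative for $A_{i-1}$. The argument then runs two further charging schemes on $K_1$ and $K_t$ to locate heavy right-side components, singles out a lucky component $C_{j^\star}$ hit by many heavy indices, and finishes with a careful case analysis (Lemmas~6--10 in the paper). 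The constant $1/5$ enters because the track has length $n/5$ and because heavy components (those receiving $\ge 7$ of a possible $8$ charges) number at least $n/5$; neither of these quantitative features emerges from a generic max-matching-plus-swap framework.
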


\begin{figure}
\centerline{\includegraphics{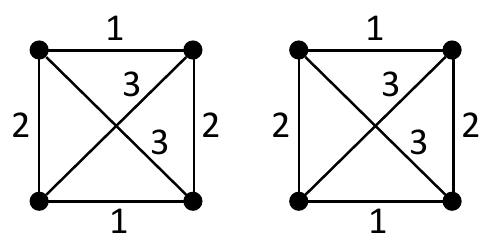}}
\caption{\label{fig_n_3}Here $|K_i| = 8$ for all $i=1,2,3$, and yet there is no rainbow matching.}
\end{figure}

As we will see, it is enough to take $c=5000$ in Theorem~\ref{thm_bd}.

\subsection{Overview of the proof}

We prove Theorem~\ref{thm_bd} by a modification of Grinblat's argument. The proof follows by induction on the number of equivalence relations $n$, showing that given a rainbow matching (of $n-1$ pairs) for the equivalence relations $A_2,\ldots,A_n$, it is possible to obtain a rainbow matching for $A_1, A_2,\ldots,A_n$.
The proof follows in three main steps, where in each step, we observe that it is either possible to complete a full fledged rainbow matching, or to slightly extend the previous formation at hand. The final formation, which is the result of the third step, allows us to complete the rainbow matching, concluding the proof.

As mentioned above, we start with a rainbow matching over $A_2,\ldots,A_n$ of the form $\{a_i,b_i\}_{i = 2}^{n}$, where $a_i \eqv{i} b_i$. 
%Let $B$ be the set of all $2n-2$ elements. 
In the first step, we transform the first  $t-1 = \lfloor n/5 \rfloor$ pairs into a \emph{track} of $t-1$ \emph{components} of the form $\{a_i,b_i,c_i,d_i\}_{i = 2}^{t}$, where $a_i \eqv{i-1} c_i$ and $b_i \eqv{i-1} d_i$ (the choice of indices is without loss of generality, by some renaming of indices). This $(t-1)$-long track is called $\Cleft$ and the remaining $4n/5$ pairs are called $\Cright$ (see Figure~\ref{fig_train}). 
%Let $B'$ be the set of elements appearing in either $\Cleft$ or in $\Cright$. 
The construction of $\Cleft$ is inductive, and follows by showing that an $m$-long track, for $m\le t-2$, can be extended.  This is true, since otherwise, there exists a pair $x \eqv{m+1} y$ that allows us to complete a full fledged rainbow matching. See Lemma~\ref{lemma_train} below for the complete argument.  

In the second step, we consider the elements of $K_1$ and $K_{t}$. Specifically, we count the number of such elements that are connected to a component $C_i$ in $\Cleft$ or to a component $C_j$ in $\Cright$. We first observe that no such $C_i$ or $C_j$ can account for more than four elements of either $K_1$ and $K_{t}$. Otherwise, it is not hard to complete a full fledged matching. We call a component \emph{heavy} if it accounts for at least 7 elements in $K_1\cup K_{t}$. We further observe that the existence of $5$ heavy components in $\Cleft$ enables us to complete the desired matching. We, hence, assume that all but four heavy components appear in $\Cright$. Let $H$ be the set of heavy components in $\Cright$. A simple counting argument yields that $H$ is of size at least  $n/5+c-4$. 
Assuming that we cannot complete a rainbow matching using a single component $C_i$ with $i\in H$, we move on to the following third step. 

The third and final step pinpoints a component $C_{j^*} = \{a_{j^*},b_{j^*}\}$ in $\Cright$, such that there exist $u_i,v_i\in K_i$ with $a_{j^*}\eqv{i}u_i$ and $b_{j^*}\eqv{i}v_i$ for many $i\in H$. Furthermore, we prove the existence of two indices $i_1,i_2\in H$, and a ``free'' pair of elements $x\eqv{j^*}y$, such that the following substitutions are possible, completing a full fledged rainbow matching: $x\eqv{j^*}y$ will represent $A_{j^*}$ (replacing $C_{j^*}$), the two pairs representing $A_{i_1}, A_{i_2}$ will be replaced with, say, $a_{j^*}\eqv{i_1}u_{i_1}$ and $b_{j^*}\eqv{i_2}v_{i_2}$, and finally, $A_1$ and $A_t$ will now be represented by, say, $a_{i_1}\eqv{1}q$ and $a_{i_2}\eqv{t}p$, where $q,p$ exist since $i_1,i_2\in H$ (see Figure~\ref{fig_jstar} for an illustration). 

\subsection{Subsequent work}

In a follow-up paper, Clemens et al.~\cite{CEP} have solved the problem asymptotically, by showing that $\mathfrak v(n) \le 3n + O(\sqrt n)$.

\section{Proof of Theorem~\ref{thm_bd}}\label{sec:mainProof}

Suppose that $|K_i| \ge (3+1/5)n+c$ for each $i = 1, \ldots, n$, for some constant $c$ to be specified later. We can assume by induction on $n$ that $A_2, \ldots, A_n$ have a rainbow matching $a_2 \eqv{2} b_2$, $a_3 \eqv{3} b_3$, $\ldots$, $a_n \eqv{n} b_n$. Let $B = \{a_2, b_2, \ldots, a_n, b_n\}$.

\begin{observation}\label{obs_1}
If there exist two distinct elements $a_1 \eqv{1} b_1$ with $a_1, b_1 \in X \setminus B$ then we are immediately done.
\end{observation}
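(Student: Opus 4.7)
The plan is essentially immediate from the definitions: to prove the observation, I would simply append the pair $(a_1, b_1)$ to the existing rainbow matching $\{(a_i, b_i)\}_{i=2}^{n}$ and verify that the resulting list $a_1, b_1, a_2, b_2, \ldots, a_n, b_n$ satisfies the two defining properties of a rainbow matching for $A_1, \ldots, A_n$.

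First I would check pairwise distinctness of these $2n$ elements. The $2(n-1)$ elements of $B$ are distinct because $\{(a_i, b_i)\}_{i=2}^{n}$ is assumed to be a rainbow matching for $A_2, \ldots, A_n$. The two elements $a_1, b_1$ are distinct from one another by hypothesis, and both lie in $X \setminus B$, so they are distinct from every element of $\{a_2, b_2, \ldots, a_n, b_n\}$. This exhausts all pairs among the $2n$ elements. Next I would observe that the required equivalences $a_i \eqv{i} b_i$ hold for $i = 2, \ldots, n$ by the inductive rainbow matching, while $a_1 \eqv{1} b_1$ holds directly by the hypothesis of the observation. Hence all $n$ equivalence conditions are satisfied and the extended list is the desired rainbow matching.

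There is no substantive obstacle; the observation is a convenient bookkeeping statement that lets later steps of the main argument dispose of a ``trivial'' configuration in one line. Its principal use will be contrapositive: in everything that follows, one may assume that \emph{no} pair $a_1 \eqv{1} b_1$ lies entirely outside $B$, which forces every $1$-equivalence class to have at most one element in $X \setminus B$, and therefore forces $K_1$ to have large intersection with $B$. That structural consequence — rather than the observation itself — is where the real work toward Theorem~\ref{thm_bd} will begin, in particular setting up the counting that drives the construction of the track $\Cleft$ in Lemma~\ref{lemma_train}.
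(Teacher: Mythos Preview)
Your proof is correct and matches the paper's approach exactly: the observation is stated without proof in the paper because adjoining the pair $a_1,b_1$ to the inductive rainbow matching for $A_2,\ldots,A_n$ is immediate, precisely as you spell out. Your remarks on the contrapositive use are also accurate and align with how the paper proceeds in the very next paragraph.
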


Hence, let us assume that the above is not the case. Thus, every element in $K_1 \setminus B$ must be $1$-equivalent to some element of $B$ (possibly more than one). However, no two distinct elements of $K_1 \setminus B$ can be $1$-equivalent to the \emph{same} element of $B$ (by the transitivity of $\eqv{1}$).

Therefore, by the pigeonhole principle, there must exist an index\footnote{Actually, many.} in $\{2, \ldots, n\}$, which without loss of generality we assume to be $2$, for which there exist two distinct elements $c_2, d_2 \in X \setminus B$ satisfying $a_2 \eqv{1} c_2$, $b_2 \eqv{1} d_2$.

We can now similarly consider $K_2$: Unless we are immediately done, there must exist an index in $\{3, \ldots, n\}$, which without loss of generality we assume to be $3$, for which there exist two distinct elements $c_3, d_3 \in X \setminus (B\cup \{c_2, d_2\})$ satisfying $a_3 \eqv{2} c_3$, $b_3 \eqv{2} d_3$.

We can continue in this way:

\begin{lemma}\label{lemma_train}
Let $t = \lfloor n/5 \rfloor$. We can find $t-1$ distinct indices in $\{2, \ldots, n\}$, which without loss of generality we assume to be $2, \ldots, t$, and we can find $2(t-1)$ pairwise distinct elements $c_2, d_2, \ldots, c_t, d_t \in X \setminus B$, such that $a_i \eqv{i-1} c_i$ and $b_i \eqv{i-1} d_i$ for all $2\le i\le t$.
\end{lemma}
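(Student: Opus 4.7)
The plan is to prove the lemma by induction on the current track length $m$, extending one component at a time; the base case $m = 0$ is essentially given by the Observation and the paragraphs immediately preceding the lemma. Throughout I may assume that no rainbow matching has been produced yet, else the overall proof of Theorem~\ref{thm_bd} is complete. For the inductive step, suppose $m < t-1$ and we have a track $\{a_i, b_i, c_i, d_i\}_{i=2}^{m+1}$ (with $a_i \eqv{i-1} c_i$ and $b_i \eqv{i-1} d_i$, after relabeling). Set $B_m = B \cup \{c_i, d_i : 2 \le i \le m+1\}$, so $|B_m| = 2n + 2m - 2$, and attempt to extend using equivalence $A_{m+1}$, i.e., find a fresh slot $j \in \{m+2, \ldots, n\}$ and distinct $c, d \in X \setminus B_m$ with $a_j \eqv{m+1} c$ and $b_j \eqv{m+1} d$.

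First I would rule out that there exist distinct $x, y \in X \setminus B_m$ with $x \eqv{m+1} y$, since we would then obtain a rainbow matching by taking $(a_i, c_i)$ for $A_{i-1}$ ($i = 2, \ldots, m+1$), $(x, y)$ for $A_{m+1}$, and the original $(a_j, b_j)$ for $A_j$ with $j \ge m+2$; all these elements are distinct because $x, y \notin B_m$. Assuming no such pair exists, every element of $K_{m+1} \setminus B_m$ is $(m+1)$-equivalent to some element of $B_m$, and by transitivity of $\eqv{m+1}$ this assignment is injective, yielding $f : K_{m+1} \setminus B_m \hookrightarrow B_m$ with image $S$.

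The key claim is that $S$ contains no element of the track. If some track element $e$ lies in $S$, let $x := f^{-1}(e) \in X \setminus B_m$; I build a rainbow matching using $(e, x)$ for $A_{m+1}$ as follows. When $e \in \{b_i, d_i\}$, the element $e$ is already unused by the chain $(a_j, c_j)_{j=2}^{m+1}$, so no modification is needed. When $e \in \{a_i, c_i\}$, I swap component $i$'s representative of $A_{i-1}$ from $(a_i, c_i)$ to $(b_i, d_i)$, freeing $a_i$ and $c_i$; the equivalence $A_i$ remains covered by $(a_{i+1}, c_{i+1})$ in the chain (or, if $i = m+1$, the swap itself covers $A_m$ while $(e, x)$ covers $A_{m+1}$). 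The remaining $A_j$ ($j \ge m+2$) use the original pairs, and all selected elements are pairwise distinct because $x \notin B_m$ and $e$ has been cleanly removed from the chain. Consequently $S \subseteq \{a_j, b_j : m+2 \le j \le n\}$; if the extension fails then at most one of $a_j, b_j$ lies in $S$ for each such $j$, yielding $|S| \le n - m - 1$. Combined with
\[
|S| \;=\; |K_{m+1} \setminus B_m| \;\ge\; (3 + 1/5)n + c - (2n + 2m - 2) \;=\; (6/5)n - 2m + c + 2,
\]
this gives $m \ge n/5 + c + 3$, contradicting $m \le t - 2 \le n/5 - 2$ whenever $c \ge 0$. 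Hence the extension succeeds and the induction goes through.

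The main obstacle is the ``no track element in $S$'' claim: one must carefully check, across the four types of track elements and across the range of positions $i$ (with a separate treatment for the boundary case $i = m+1$), that the restructured chain together with $(e, x)$ and the unchanged pairs forms a rainbow matching of pairwise distinct elements.
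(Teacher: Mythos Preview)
Your proof is correct and takes essentially the same approach as the paper: an induction that, at each step, rules out a free $\eqv{m+1}$-pair, then rules out any free element attaching to the track, and finally counts to force some right-side pair $\{a_j,b_j\}$ to acquire two fresh $\eqv{m+1}$-neighbours. The paper packages the count as a charging scheme (each component receives at most four charges) rather than your injection bound $|S|\le n-m-1$, and it states the ``key claim'' as the slightly more general Observation~\ref{obs_unless} (allowing both $x,y\in\Cleft$) because that form is reused later; but the underlying argument is identical.
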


\begin{proof}
Suppose by induction that we have already found $c_2, d_2, \ldots, c_i, d_i$.

Let $B'= B \cup \{c_2, d_2, \ldots, c_i, d_i\}$. Partition $B'$ into ``components" as follows: $C_2 = \{a_2, b_2, c_2, d_2\}$, $\ldots$, $C_i = \{a_i, b_i, c_i, d_i\}$; $C_{i+1} = \{a_{i+1}, b_{i+1}\}$, $\ldots$, $C_n = \{a_n,b_n\}$. Let $\Cleft = C_2\cup \cdots\cup C_i$ and $\Cright = C_{i+1}\cup \cdots\cup C_n$. See Figure~\ref{fig_train}.

\begin{observation}\label{obs_unless}
If there exist two distinct elements $x \eqv{i} y$, with $x, y \in K_{i}\setminus \Cright$, then we are easily done \emph{unless} one of $x, y$ belongs to $\{a_j, c_j\}$ and the other one belongs to $\{b_j, d_j\}$ for the \emph{same} index $2\le j\le i$. See Figure~\ref{fig_unless}.
\end{observation}

\begin{figure}
\centerline{\includegraphics{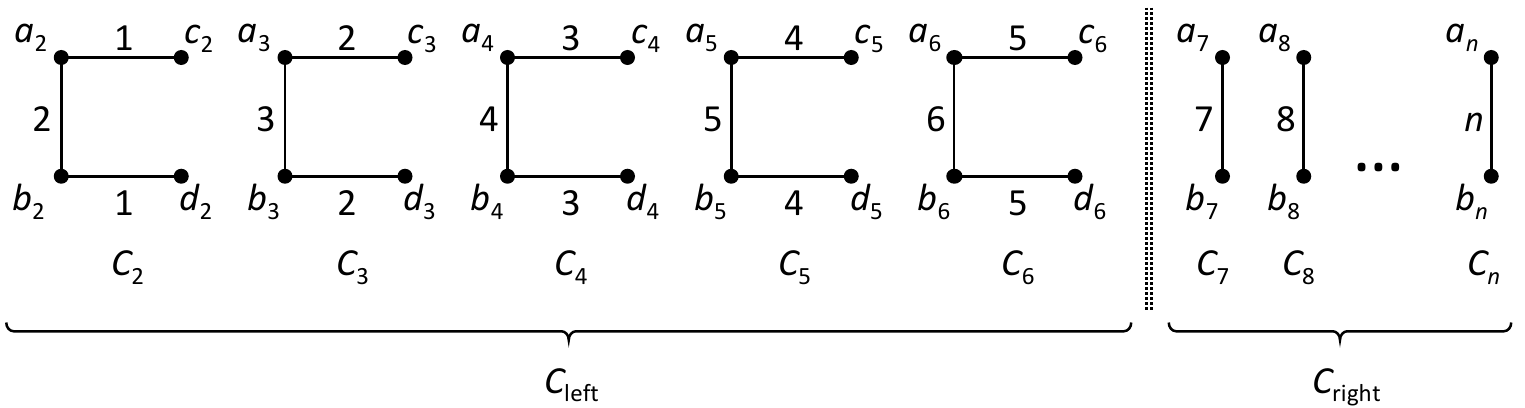}}
\caption{\label{fig_train}Left-side and right-side components.}
\end{figure}

\begin{figure}
\centerline{\includegraphics{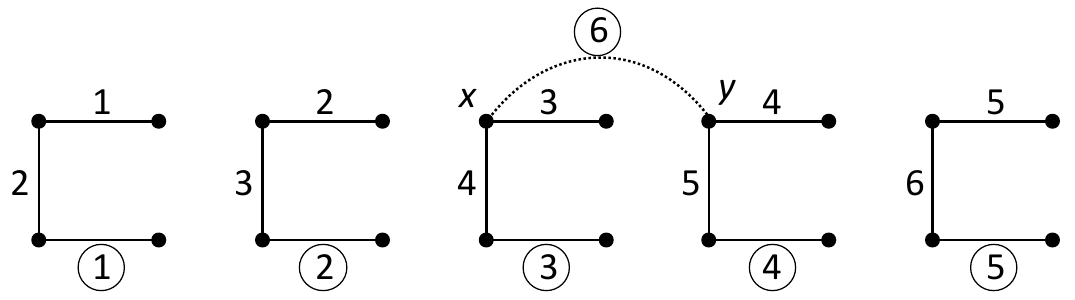}}
\caption{\label{fig_unless}If $x\eqv{i} y$ (here $i=6$), and $x,y$ belong to different left-side components, or they both belong to the top (or the bottom) part of the same left-side component (case not shown), then we can easily complete a rainbow matching (indicated by the circled edge labels).}
\end{figure}

Hence, let us charge each element of $K_{i}$ to exactly one component, as follows:

\begin{charging}
Let $x\in K_{i}$. If $x \in B'$, then $x$ is charged to the component it belongs to. Otherwise, by Observation~\ref{obs_unless}, $x$ must be $i$-equivalent to some $y\in \Cright$; then we charge $x$ to $y$'s component. (If $x$ can be charged to more than one right-side component, then we choose one of them arbitrarily.)
\end{charging}

The total number of charges is equal to $|K_{i}|$, which is at least $(3 + 1/5)n+c$. By Observation~\ref{obs_unless} and the transitivity of $\eqv{i}$, no component can get more than four charges. Hence, if $i\le n/5$, then there must be a component in $\Cright$ that received four charges. Without loss of generality it is $C_{i+1}$. Of the four elements charged to it, the two not belonging to it are the desired $c_{i+1}, d_{i+1}$. 

This concludes the proof of Lemma~\ref{lemma_train}.
\end{proof}

Define the set $B'$, the components $C_2, \ldots, C_n$, and the sets $\Cleft$ and $\Cright$ as above, with $t$ in place of $i$. Hence, $\Cleft= C_2\cup \cdots\cup C_t$ and $\Cright = C_{t+1}\cup \cdots\cup C_n$.

We now use the following charging scheme for $A_1$ and $A_t$:

\begin{charging}
Consider an element $x\in K_1$. If $x\in B$, then we $1$-charge $x$ to the component it belongs to. Otherwise, by Observation~\ref{obs_1}, $x$ must be $1$-equivalent to some element $y\in B$; then we $1$-charge $x$ to the component that contains $y$.

Consider the elements of $K_t$. We $t$-charge every element $z\in (K_t \cap B)$ to the component it belongs to. If $c_i \eqv{t} d_i$ for some $i$, then we $t$-charge both elements to the component $C_i$ that contains them. For every $z\in K_{t}$ not covered by the above cases, by Observation~\ref{obs_unless} there must be a component $C_i$ that contains an element $y\eqv{t}z$ (furthermore, either $C_i$ is a right-side component, or else $z\in C_i$); we charge $z$ to $C_i$.
\end{charging}

\begin{figure}
\centerline{\includegraphics{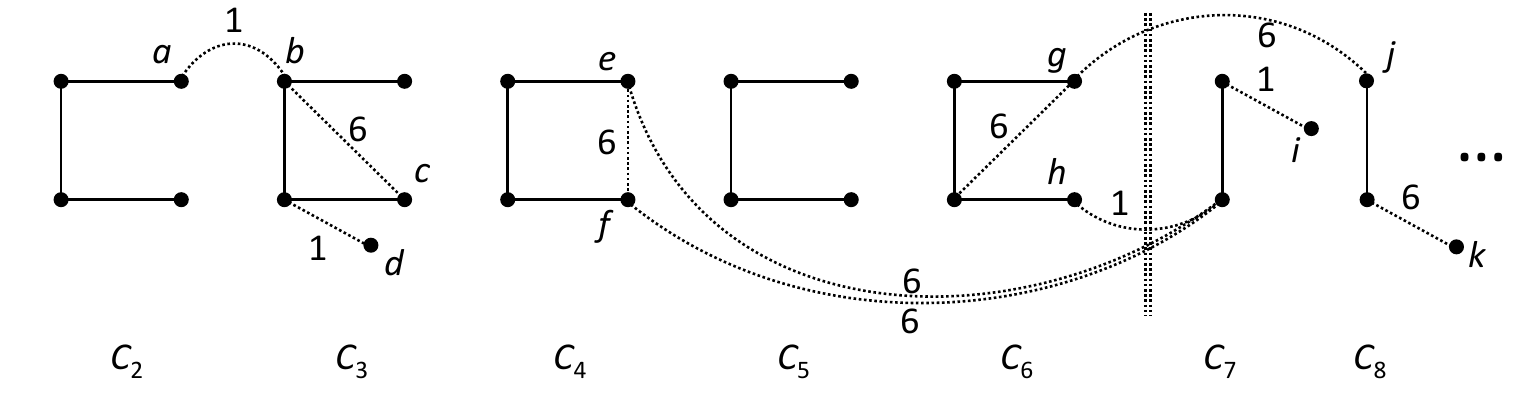}}
\caption{\label{fig_charging_2}Charging Scheme 2. Here $t=6$. Elements that belong to $B$ (such as $b$ and $j$) are always $1$- or $6$-charged to their own components. Elements $a$ and $d$ are $1$-charged to $C_3$. Elements $h$ and $i$ are $1$-charged to $C_7$. Element $c$ is $6$-charged to $C_3$. Elements $e$ and $f$ are $6$-charged to $C_4$ (despite the $6$-edges to $C_7$). Element $g$ is $6$-charged either to $C_6$ or to $C_8$. Element $k$ is $6$-charged to $C_8$.}
\end{figure}

Figure~\ref{fig_charging_2} illustrates Charging Scheme 2.

\begin{lemma}\label{lem_4_charges}
In Charging Scheme 2, no component $C_i$ can receive more than four $1$-charges, or more than four $t$-charges.
\end{lemma}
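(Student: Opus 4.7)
The plan is to verify the bound of $4$ separately for 1-charges and $t$-charges, and to further split the $t$-charge analysis into right-side versus left-side components. In each case I partition the charges to $C_i$ into an ``internal'' contribution (elements of $C_i \cap B$ that lie in the relevant kernel) and an ``external'' contribution (charged in by equivalence from outside), and bound each by $2$.

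For 1-charges on any $C_i$, the internal contribution is at most $|C_i \cap B| \le 2$, since even for a left-side $C_i$ only $a_i, b_i$ lie in $B$. For the external contribution, I reuse the remark made just after Observation~\ref{obs_1}: by transitivity of $\eqv{1}$ together with Observation~\ref{obs_1}, no element $y \in B$ is $1$-equivalent to two distinct elements of $K_1 \setminus B$. So each of $a_i, b_i$ attracts at most one external 1-charge, giving at most $2$ more.

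For $t$-charges on a right-side $C_j = \{a_j, b_j\}$, case (a) contributes at most $2$. For case (c), suppose two distinct $z_1, z_2 \in K_t \setminus B$ were both $t$-charged to $C_j$ through the same $y \in \{a_j, b_j\}$; then $z_1 \eqv{t} z_2$ with $z_1, z_2 \in K_t \setminus \Cright$, so Observation~\ref{obs_unless} forces $\{z_1, z_2\} = \{c_{j'}, d_{j'}\}$ for some $j' \le t$. But then $c_{j'} \eqv{t} d_{j'}$ activates case (b), contradicting that $z_1, z_2$ were case-(c) charges. Hence each of $a_j, b_j$ attracts at most one case-(c) $t$-charge, for a total of at most $4$.

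For $t$-charges on a left-side $C_i = \{a_i, b_i, c_i, d_i\}$, case (a) again contributes at most $2$. The subtle point is that cases (b) and (c) together contribute at most $2$ more. The charging rule forces a case-(c) $t$-charge $z$ on a non-right-side component to satisfy $z \in C_i$; combined with $z \notin B$ this gives $z \in \{c_i, d_i\}$. If $c_i \eqv{t} d_i$, both of $c_i, d_i$ fall under case (b) and case (c) is empty for $C_i$; if $c_i \not\eqv{t} d_i$, case (b) is empty for $C_i$ while case (c) contributes at most $2$ from $\{c_i, d_i\}$. The main subtlety I expect is keeping the three regimes disjoint on left-side components: the clause ``either $C_i$ is a right-side component, or else $z \in C_i$'' in the charging rule is precisely what caps case-(c) charges to a left-side $C_i$ and prevents the count from exceeding $4$.
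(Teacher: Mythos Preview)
Your proof is correct and follows essentially the same approach as the paper: bound the internal charges from $\{a_i,b_i\}$ by $2$ and the external charges by $2$, using transitivity together with Observation~\ref{obs_1} (for $1$-charges) or Observation~\ref{obs_unless} (for $t$-charges) to cap the external contribution. Your treatment is in fact more explicit than the paper's---you separate the right-side and left-side analysis for $t$-charges and spell out why cases (b) and (c) jointly contribute at most $2$ on a left-side component---whereas the paper compresses the $t$-charge argument into a single sentence about the exceptional case $\{z_1,z_2\}=\{c_j,d_j\}$.
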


\begin{proof}
Since no two different elements outside $B$ can be $1$-equivalent to the same element of $B$, every component $C_i$ can receive at most two $1$-charges from its own elements $a_i$, $b_i$, plus at most two more $1$-charges from other elements.

The argument regarding $t$-charges is only slightly more complicated: There \emph{might} be an element $z$ in a right-side component $C_i$ that is $t$-equivalent to two different elements outside $B$. However, then they must be $c_j$, $d_j$ for some left-side component $C_j$, by Observation~\ref{obs_unless}. Hence, they are $t$-charged to $C_j$ and not to $C_i$.
\end{proof}

For each $2\le i \le n$, let $\sigma_i$ (resp.~$\tau_i$) be the number of $1$-charges (resp.~$t$-charges) that component $C_i$ received; let $S_i$ (resp.~$T_i$) be set of elements \emph{not} in $\{a_i,b_i\}$ that were $1$-charged (resp.~$t$-charged) to $C_i$; and let $U_i = S_i\cup T_i$.

The sets $S_i$ are by definition pairwise disjoint, as are the sets $T_i$. However, for a fixed~$i$, $S_i$ is not necessarily disjoint from $T_i$; and $U_i$ is not necessarily disjoint from $B'\setminus B$. Still, no three sets $U_i$ have a common intersection.

Lemma~\ref{lem_4_charges} states that $\sigma_i\le 4$ and $\tau_i\le 4$ for each $i$. Furthermore, by the argument in the proof of Lemma~\ref{lem_4_charges}, we have $|S_i|\le 2$ and $|T_i|\le 2$. Moreover, we have $\sum \sigma_i = |K_1|$ and $\sum \tau_i = |K_{t}|$, each of which is at least $(3+1/5)n+c$.

\begin{lemma}\label{lemma_5_7}
Suppose that there exist five different left-side components that receive at least $7$ charges each; namely, suppose there exist $C_{i_1}, \ldots, C_{i_5}$, with $2\le i_1< \cdots < i_5\le t$, such that $\sigma_{i_k} + \tau_{i_k} \ge 7$ for each $1\le k\le 5$. Then we can complete a rainbow matching.
\end{lemma}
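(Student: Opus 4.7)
The plan is to complete the rainbow matching by using two of the five heavy components as external sources --- one to supply a representative pair for $A_1$ and one to supply a representative pair for $A_t$ --- while covering $A_2,\ldots,A_{t-1}$ through a shift pattern on the chain $C_2,\ldots,C_t$.

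First, I would extract structural information from heaviness. Since $\sigma_i+\tau_i\ge 7$ with each term at most $4$, both $\sigma_i,\tau_i\ge 3$. From $\sigma_i\ge 3$ one sees that $a_i,b_i\in K_1$ and that at least one of them has an outside $\eqv{1}$-partner in $K_1\setminus B$ (call it $p_i\eqv{1}a_i$ or $q_i\eqv{1}b_i$); so $C_i$ supplies a candidate $\eqv{1}$-pair for $A_1$. Since $\tau_i\ge 3$ and we are not yet done, Observation~\ref{obs_unless} (applied with $i=t$) forbids same-side $\eqv{t}$-edges inside $C_i$ (i.e.\ $a_i\not\eqv{t}c_i$ and $b_i\not\eqv{t}d_i$) and also $\eqv{t}$-edges from $C_i$ to any element of $K_t\setminus B'$. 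Hence every $\eqv{t}$-neighbour of a $K_t$-element of $C_i$ lies either inside $C_i$ as one of the four ``cross'' pairs $\{a,b\},\{a,d\},\{c,b\},\{c,d\}$, or inside $\Cright$.

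Second, I would set up the chain-shift bookkeeping and verify that both $A_1$ and $A_t$ require external supply. Label each $C_j$ $(2\le j\le t)$ as \emph{original} (represents $A_j$ via $\{a_j,b_j\}$) or \emph{shifted} (represents $A_{j-1}$ via $\{a_j,c_j\}$ or $\{b_j,d_j\}$). Coverage of the middle $A_j$'s forces the label sequence to be of the form $0^*1^*$; chain-covering $A_1$ needs the leftmost label to be ``shifted'' while chain-covering $A_t$ needs the rightmost to be ``original'', which are mutually inconsistent. Moreover, supplying $A_1$ externally via a heavy $C_{i_k}$ forces the shift to start at position $i_k$ and propagate to $t$, leaving $A_t$ uncovered --- so both $A_1$ and $A_t$ must come from heavy components.

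Third, I would carry out the construction. Pick the smallest-index heavy $C_{i_{k'}}$ that admits a within-component cross $\eqv{t}$-pair, and let it supply $A_t$ via that cross-pair; pick any larger-index heavy $C_{i_k}$ (with $i_k>i_{k'}$) and let it supply $A_1$ via its outside $\eqv{1}$-partner, so that the remaining two elements of $C_{i_k}$ form a shift pair covering $A_{i_k-1}$. The global configuration is then: original on $[2,i_{k'}-1]$, a custom mode at $C_{i_{k'}}$ (pure external if the $A_t$-pair is one of $\{a,b\},\{a,d\},\{c,b\}$, or external-plus-original if the pair is $\{c,d\}$), shifted on $[i_{k'}+1,t]$, with external-plus-shift at $C_{i_k}$. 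A direct verification shows that every $A_j$ is covered and no element is used twice.

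The main obstacle is the residual case where no heavy component admits a within-component $\eqv{t}$-pair (or where only the largest-index heavy does). In that case every $K_t$-element of every heavy $C_{i_k}$ is $\eqv{t}$-connected only to $\Cright$; transitivity via Observation~\ref{obs_unless} then forbids two distinct heavy components from sharing a $\Cright$-neighbour (else a cross-$\Cleft$ $\eqv{t}$-edge finishes the matching), so the five heavy components jointly contribute at least $5\cdot 3=15$ $\eqv{t}$-edges into $\Cright$, each landing in a distinct $\eqv{t}$-class. Handling this case requires a careful structural argument --- for example, forcing two such edges into a single right-side $C_{j^*}$, yielding $a_{j^*}\eqv{t}x$ and $b_{j^*}\eqv{t}y$ with $x,y$ in different heavy components (and then patching the disruption at $A_{j^*}$) --- and this is presumably the technical step that makes the threshold exactly $5$ rather than smaller.
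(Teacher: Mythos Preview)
Your overall architecture --- use one heavy left-side component to supply a $\eqv{t}$-pair, another to supply a $\eqv{1}$-pair, and thread the rest by a shift pattern --- is exactly the paper's strategy. But two things are off.

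First, your ``residual case'' is empty, and seeing why is the key step you are missing. Re-examine Charging Scheme~2: for a left-side $C_i$, the only elements that can be $t$-charged to $C_i$ are $a_i,b_i$ (automatically, if in $K_t$) and $c_i,d_i$; an element $c_i$ (say) is $t$-charged to $C_i$ only if it actually has a $\eqv{t}$-neighbour inside $C_i$ (otherwise case~3 of the scheme sends the charge to $\Cright$). Hence $\tau_i\ge 3$ forces one of $c_i,d_i$ to be charged to $C_i$, which in turn forces a within-component cross $\eqv{t}$-pair. So every heavy left-side component has such a pair, and your residual branch never occurs. This is exactly the opening line of the paper's proof (``Since this component received at least three $t$-charges, there must be a pair of elements among $a',b',c',d'$ that are $t$-equivalent excluding the pair $\{a',b'\}$''). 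Your speculative pigeonhole into $\Cright$ is unnecessary.

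Second, your ``direct verification \ldots\ no element is used twice'' hides the real work. The outside $\eqv{1}$-partner $p$ of (say) $a_{i_k}$ lies only in $K_1\setminus B$; it may well equal some $c_j$ or $d_j$, in particular $c'$ or $d'$ from the component supplying $A_t$, or a shift element in $[i_{k'}+1,t]$. The paper spends most of its effort on exactly these collisions: in its case~1 (cross pair $\{c',d'\}$) it takes \emph{three} further heavy components $C_{i_3},C_{i_4},C_{i_5}$ as candidates for the $A_1$-source so that at least one of the three outside partners avoids $\{c',d'\}$, and it flips the shift direction at the single $C_j$ whose $c_j$ coincides with $p$; case~2 ($\{a',d'\}$ or $\{b',c'\}$) splits further into 2a/2b, the latter needing a heavy component $C_{i_1}$ with \emph{smaller} index than $C_{i_2}$. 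This collision bookkeeping is precisely why the threshold is five rather than two, not the residual case you identified.
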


\begin{proof}
Consider the component $C_{i_2}$. For simplicity rename its four elements $a', b', c', d'$ in the obvious way. Since this component received at least three $t$-charges, there must be a pair of elements among $a', b', c', d'$ that are $t$-equivalent \emph{excluding} the pair $\{a', b'\}$. This pair cannot be $\{a', c'\}$ nor $\{b', d'\}$, by Observation~\ref{obs_unless}. Hence, the pair must be $\{c', d'\}$ (case 1), or $\{a', d'\}$ or $\{b', c'\}$ (case 2).

In case 1, we consider components $C_{i_3}$, $C_{i_4}$, $C_{i_5}$. In each one of them there must be an $a_j$ or $b_j$ that is $1$-equivalent to some $y\notin B$. At most two of these $y$'s can be $c'$ or $d'$, so the third one leads to a win, as follows (see Figure~\ref{fig_win123}, top): Suppose for concreteness that $a_{i_5} \eqv{1} y$ for $y\notin\{c',d'\}$. Then we take the pairs $c'\eqv{t} d'$, $a_{i_5} \eqv{1} y$; the pairs $a_i \eqv{i} b_i$ for all $2\le i<i_5$; and the pairs $a_i \eqv{i-1} c_i$ for all $i_5 < i \le t$, except that if $y = c_i$ for some $i>i_5$ then we take $b_i\eqv{i-1} d_i$ instead. 

Now consider case 2. Suppose for concreteness that $a' \eqv{t} d'$ (the case $b' \eqv{t} c'$ is symmetric). Let us look again at component $C_{i_2}$. First suppose that it received four $1$-charges. Then each of $a', b'$ must be $1$-equivalent to an element not in $B$. Consider the element $y\eqv{1} b'$ that is not in $B$. If $y \neq d'$ then we go to case 2a below; if $y = d'$ then we go to case 2b below.

Now suppose $C_{i_2}$ received four $t$-charges. Then we must have \emph{both} $a' \eqv{t} d'$ and $b' \eqv{t} c'$. Furthermore, $C_{i_2}$ received at least three $1$-charges, so at least one of $a', b'$, say $b'$, must be $1$-equivalent to some $y\notin B$. As before, if $y \neq d'$ we go to case 2a; otherwise we go to case 2b.

Case 2a is an easy win by taking the pairs $b'\eqv{1} y$ and $a'\eqv{t} d'$, and completing the rainbow matching as in Figure~\ref{fig_win123} (middle).

\begin{figure}
\centerline{\includegraphics{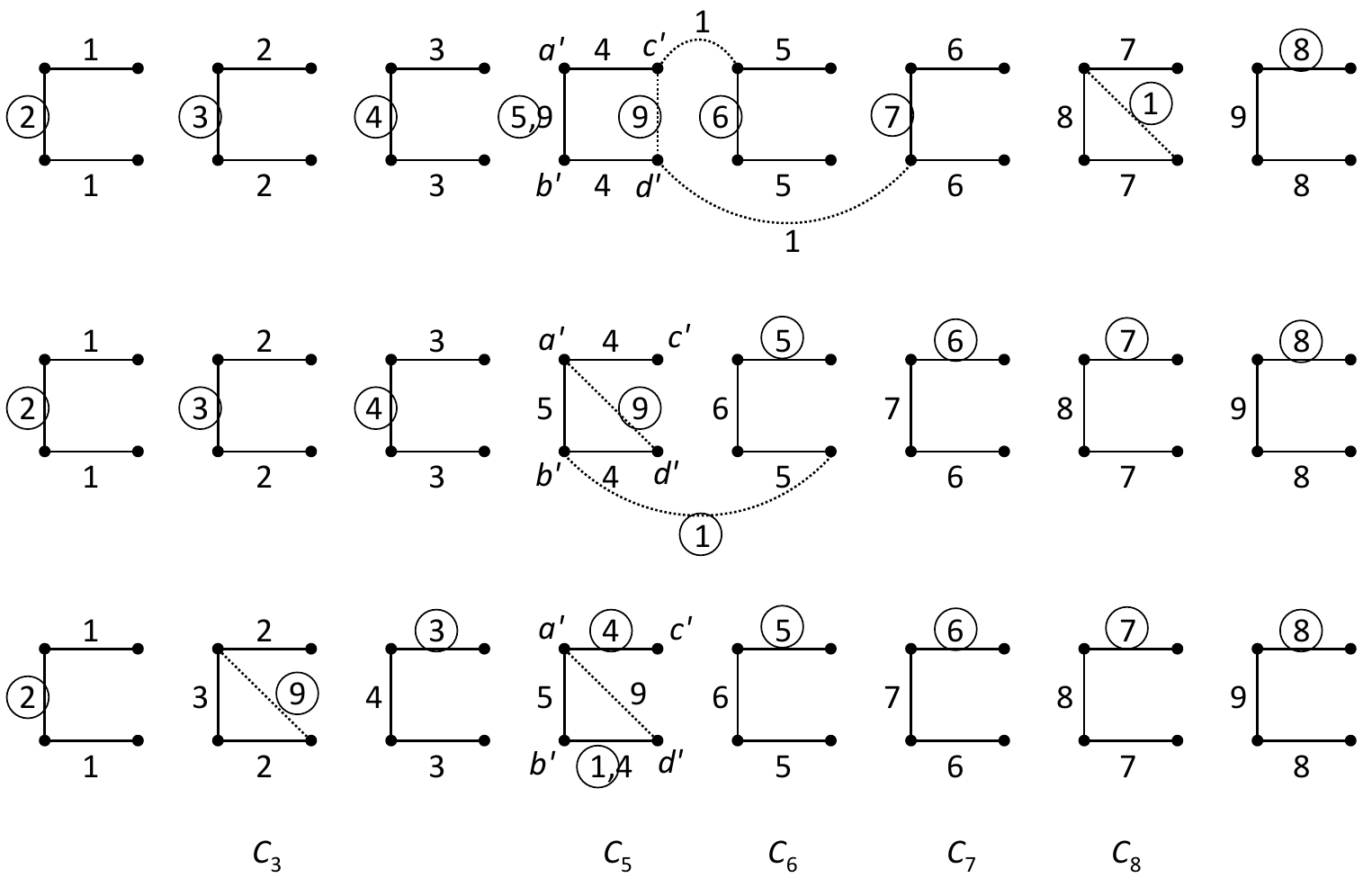}}
\caption{\label{fig_win123}Three cases considered in the proof of Lemma~\ref{lemma_5_7}. Here $t=9$, and the components that receive $7$ charges are $C_3$, $C_5$, $C_6$, $C_7$, $C_8$.}
\end{figure}

In case 2b, we take the pairs $a'\eqv{i_2-1} c'$ and $b'\eqv{1} d'$; from $C_{i_1}$ we take a pair $x\eqv{t} y$; and we complete the rainbow matching as in Figure~\ref{fig_win123} (bottom).
\end{proof}

Recall that no component can receive more than $8$ charges, and that the total number of charges is $2(16n/5+c)$. Therefore, the number of components that receive at least $7$ charges must be at least $n/5+c$. Lemma~\ref{lemma_5_7} implies that at most four of these components can be on the left side.

Hence, there must be at least $n/5+c-4\ge n/5$ right-side components that receive at least $7$ charges each. Call such components ``heavy", and let $H$ be the set of their indices; namely, let
\begin{equation*}
H = \bigl\{ i \in \{t+1, \ldots, n\} : \sigma_i + \tau_i \ge 7 \bigr\}.
\end{equation*}

For each $i\in H$ we have $|S_i|\ge 1$, $|T_i|\ge 1$, $\max{\{|S_i|,|T_i|\}}=2$, and $2\le |U_i|\le 4$. Furthermore, for every two distinct heavy indices $i,j\in H$ we have $|U_i\cup U_j| \ge 3$.

Let $i\in H$. It is not necessarily possible to find four distinct elements $v_1,v_2\in C_i$, $w_1,w_2\in U_i$, such that $v_1\eqv{1} w_1$ and $v_2\eqv{t} w_2$ (even if $\sigma_i = \tau_i = 4$) since we could have $a_i\eqv{1} x$, $b_i\eqv{1} y$, $a_i\eqv{t} y$, $b_i\eqv{t} x$. Nevertheless, we can prove the following lemma:

\begin{lemma}\label{lemma_2_heavy}
Let $C_i, C_j$, $i,j\in H$, be two distinct heavy components. Then we can find four distinct elements $v_1,v_2\in C_i\cup C_j$, $w_1,w_2\in U_i\cup U_j$, such that $v_1\eqv{1} w_1$ and $v_2\eqv{t} w_2$.

Furthermore, for any two fixed elements $q, r\in U_i\cup U_j$, it is always possible to do so guaranteeing that \emph{exactly~one} of $q$, $r$ belongs to $\{w_1,w_2\}$.
\end{lemma}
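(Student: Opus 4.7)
My plan is to first extract the structural constraints on $S_k, T_k$ imposed by heaviness, and then prove the two assertions by case analysis, falling back to a ``swap'' whenever the default construction fails.

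\textbf{Structure of heavy components.} For each $k \in \{i,j\}$, since $\sigma_k + \tau_k \ge 7$ and each is at most $4$, both $\sigma_k$ and $\tau_k$ are at least $3$. Redoing the proof of Lemma~\ref{lem_4_charges} in this regime shows that $\sigma_k \ge 3$ forces $a_k, b_k \in K_1$; and $\sigma_k = 4$ additionally forces $|S_k| = 2$ and $a_k \not\eqv{1} b_k$, because two elements of $S_k$ cannot share a common $1$-partner in $\{a_k, b_k\}$ by Observation~\ref{obs_1}, while $a_k \eqv{1} b_k$ would confine $S_k$ to a single $1$-class of $K_1 \setminus B$, forcing $|S_k| \le 1$. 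The analogous statements for $\tau_k$ follow from Observation~\ref{obs_unless} (two $T_k$-elements $t$-equivalent to the same $v \in \{a_k,b_k\}$ would have to coincide with $c_\ell, d_\ell$ of a left-side component, contradicting their membership in $T_k$). Consequently $a_k, b_k \in K_1 \cap K_t$, $(|S_k|, |T_k|) \in \{(1,2), (2,1), (2,2)\}$, and when $|S_k| = 2$ (resp.\ $|T_k| = 2$) the two elements have distinct $1$-partners (resp.\ $t$-partners) in $\{a_k, b_k\}$.

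\textbf{First part.} The default choice is $w_1 \in S_i$ with a $1$-partner $v_1 \in \{a_i, b_i\}$, and $w_2 \in T_j$ with a $t$-partner $v_2 \in \{a_j, b_j\}$; then $v_1 \ne v_2$ (disjoint components) and $\{v_1, v_2\} \cap \{w_1, w_2\} = \emptyset$ because $S_k, T_k \subseteq X \setminus B$. The only failure mode is $w_1 = w_2$, which forces $|S_i| = |T_j| = 1$ with $S_i = T_j$. Heaviness then yields $|T_i| = |S_j| = 2$, and we swap to $w_1 \in S_j$, $w_2 \in T_i$; since both sets have size~$2$, we can always pick $w_1 \ne w_2$ even in the worst case $S_j = T_i$.

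\textbf{Furthermore part.} Set $Q = \{q, r\}$, $S = S_i \cup S_j$, $T = T_i \cup T_j$. Since $Q \subseteq S \cup T$, either $Q \cap S = \emptyset$ (so $Q \subseteq T$) or $Q \cap S \ne \emptyset$. In the first case, pick $w_2 \in Q$ with its $t$-partner $v_2$, and $w_1 \in S$ with $1$-partner $\ne v_2$; this works because $S \cap Q = \emptyset$ ensures $w_1 \ne w_2$, $|S| \ge 2$, and at most one $S$-element has any given $1$-partner. In the second case, attempt Option~A: take $w_1 \in Q \cap S$ with its $1$-partner $v_1$, and seek $w_2 \in T \setminus Q$ with $t$-partner $\ne v_1$. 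The slack $|S| + |T| \ge 6$ (the sum ranges over the nine possibilities for $(|S_i|, |T_i|, |S_j|, |T_j|)$) makes Option~A succeed generically; the pinch-points are $T \subseteq Q$ (so $T = Q$) and $|T \setminus Q| = 1$ with the sole candidate having $t$-partner $v_1$, and they are rerouted by trying the other element of $Q \cap S$ as $w_1$, by switching to Option~B if $Q \cap T \ne \emptyset$ too, or by swapping components $i \leftrightarrow j$ as in the first part. The main obstacle is verifying that at least one reroute succeeds in every sub-configuration; the slack $|S_k| + |T_k| \ge 3$ together with the distinct-partner property leaves enough room, reducing this to a finite case check.
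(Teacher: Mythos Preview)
Your structural preamble and your proof of the first assertion are correct and arguably tidier than the paper's treatment (the paper does not prove the first part separately; it lets it fall out of the ``furthermore'' clause). The observation that $S_k,T_k\subseteq X\setminus B$ for right-side $k$, so that $\{v_1,v_2\}\cap\{w_1,w_2\}=\emptyset$ comes for free, is exactly the right bookkeeping.

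The ``furthermore'' part, however, is not a proof as written. You set up Option~A, name the two pinch-points, list three possible reroutes (switch $w_1$ within $Q\cap S$, switch to an undefined Option~B, swap $i\leftrightarrow j$), and then write ``reducing this to a finite case check'' without performing it. That check is precisely the content of the lemma: the paper itself warns that the clause ``requires a tedious case analysis'' and then carries it out. Your sketch does not establish that at least one reroute succeeds in every sub-configuration; in particular, you never define Option~B, you never argue that when $|T\setminus Q|=1$ and the sole candidate has $t$-partner $v_1$ one of the listed escapes must apply, and the ``swap $i\leftrightarrow j$'' move has no clear meaning here since the roles of $i$ and $j$ are already symmetric in your setup.

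By way of comparison, the paper organizes the case analysis differently and this buys it something concrete. It first disposes of $q\in S_i\cap T_i$ (and the three symmetric cases), then assumes without loss of generality $q\in S_i$ and restricts attention to $T_j$ rather than to $T=T_i\cup T_j$. Because $w_1$ then has $v_1\in C_i$ and $w_2$ has $v_2\in C_j$, the constraint $v_1\ne v_2$ is automatic in the main branch, and the only residual obstruction is $T_j\subseteq\{q,r\}$, which splits into two short sub-cases. Your choice to work with $S=S_i\cup S_j$ and $T=T_i\cup T_j$ is more symmetric but forces you to track the $v_1\ne v_2$ constraint throughout, which is where your reroutes multiply. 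If you want to keep your framework, you need to actually execute the finite check; alternatively, adopting the paper's asymmetry (fix $q\in S_i$, look only at $T_j$) would shorten the remaining work considerably.
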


The ``furthermore" clause of Lemma~\ref{lemma_2_heavy} will be used once, in the proof of Lemma~\ref{lemma_jstar} below. Unfortunately, it requires a tedious case analysis.

\begin{proof}[Proof of Lemma~\ref{lemma_2_heavy}]
Suppose first that $q\in S_i \cap T_i$ (hence, $q\notin S_j$ and $q\notin T_j$, so $q\notin U_j$). We have $|U_j|\ge 2$, so there exists an element $s\in U_j\setminus\{r\}$. If $s\in S_j$ then we can take $w_1=s$, $w_2 = q$ and finish; otherwise, $s\in T_j$, so we are done by taking $w_1=q$, $w_2 = s$.

The case $q\in S_j\cap T_j$ is symmetric, as well as the cases $r\in S_i\cap T_i$ and $r\in S_j\cap T_j$. So suppose none of these cases apply.

Suppose for concreteness that $q\in S_i$ (the three other possibilities are symmetric). Consider $T_j$. If it contains an element $s\notin\{q,r\}$, then we are done by taking $w_1=q$, $w_2=s$. Hence, assume $T_j\subseteq\{q,r\}$.

Suppose $q\in T_j$. Suppose for concreteness that $q\eqv{1} a_i$ and $q\eqv{t} a_j$. Then $b_i$ must be $1$- or $t$-equivalent to an element $z_1\in U_i$, $z_1\neq q$; and $b_j$ must be $1$- or $t$-equivalent to an element $z_2\in U_j$, $z_2\neq q$. If one of $z_1, z_2$ is different from $r$, then we are done by taking it and $q$ for $\{w_1, w_2\}$. Otherwise, we have $r = z_1=z_2$. Take a third element $s\in (U_i\cup U_j) \setminus \{q,r\}$. Hence, $s$ is $1$- or $t$-equivalent to one of $a_i$, $a_j$, $b_i$, $b_j$. In the first two cases we take $\{w_1,w_2\} = \{r,s\}$, whereas in the last two cases we take $\{w_1, w_2\} = \{q,s\}$.

Finally, suppose $T_j = \{r\}$. Say $r$ is $t$-equivalent to $a_j$. Then $b_j$ must be $1$- or $t$-equivalent to some element $s\notin\{q,r\}$. Then we take $\{w_1,w_2\}$ to be $s$ and one of $q$, $r$.
\end{proof}

For $i\in H$, let us call a left-side component \emph{$i$-tainted} if it intersects $T_i$. Since $|T_i|\le 2$, for each $i$ there are at most two $i$-tainted components.

\begin{lemma}\label{lemma_outside}
Let $C_i$, $i\in H$ be a heavy component.
\begin{enumerate}
\item[(a)] If there exist two distinct elements $x\eqv{i} y$, both outside $B\cup S_i$, then we are done.

\item[(b)] Let $C_j$ be a left-side component that is not $i$-tainted. Then, if one of $a_j,b_j$ is $i$-equivalent to an element $z$ outside $B'\cup T_i$, then we are done.
\end{enumerate}
\end{lemma}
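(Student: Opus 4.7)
The plan is to exploit the heaviness of $C_i$---which forces $\sigma_i\ge 3$ and $\tau_i\ge 3$, and hence $|S_i|\ge 1$ and $|T_i|\ge 1$---to obtain witnesses in $S_i$ and $T_i$ that, combined with the pair supplied by the hypothesis, let us locally rearrange the rainbow matching.

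For part~(a), I would take $(x,y)$ as the new representative of $A_i$, which frees $a_i$ and $b_i$. Pick any $w_1\in S_i$; by definition $w_1\eqv{1}a_i$ or $w_1\eqv{1}b_i$, say WLOG the former. Then represent $A_1$ by $(a_i,w_1)$ and keep the default pair $(a_k,b_k)$ for every remaining $A_k$, $k\in\{2,\ldots,n\}\setminus\{i\}$. Distinctness is immediate: elements of $B$ are $1$-charged to their own components, so $w_1\notin B$; also $w_1\notin\{a_i,b_i\}$ by definition of $S_i$; meanwhile $x,y\notin B\cup S_i$ separates them from $w_1$ and from all $a_k,b_k$.

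For part~(b), assume WLOG $a_j\eqv{i}z$. I would take $(a_j,z)$ for $A_i$ (which frees $a_i,b_i$ and ``steals'' $a_j$ from the left track), pick $w_2\in T_i$ with, say, $w_2\eqv{t}b_i$, and take $(b_i,w_2)$ for $A_t$. To cover $A_1,\ldots,A_{t-1}$ without using $a_j$, I shift the left track: take $(a_k,c_k)$ for $A_{k-1}$ for each $k\in\{2,\ldots,t\}\setminus\{j\}$, and $(b_j,d_j)$ for $A_{j-1}$. The right-side indices $k\in\{t+1,\ldots,n\}\setminus\{i\}$ keep their default pairs. Since $z\notin B'$ it clashes with nothing, and $w_2\notin B$ together with $w_2\notin C_j$ (by non-taint of $C_j$) kill the obvious potential conflicts.

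The main obstacle is the residual case $w_2=c_k$ for some $k\in\{2,\ldots,t\}\setminus\{j\}$, where the shift would use $c_k$ simultaneously. My fix is a local swap at $C_k$: replace the shift's pair $(a_k,c_k)$ for $A_{k-1}$ by $(b_k,d_k)$, which is also $(k-1)$-equivalent and frees $c_k$. The symmetric scenario $w_2=d_k$ is automatic, since the shift uses $c_k$ and leaves $d_k$ free. Part~(a) is essentially immediate; (b) is where the care lies, because one has to chase the single element $w_2$ through the possible tainted left-side components.
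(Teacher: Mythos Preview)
Your argument is correct and is essentially the paper's own proof spelled out in words: the paper takes $u\in S_i$ in (a) and $v\in T_i$ in (b) and then refers to Figure~\ref{fig_outside} for the rainbow matching, which is exactly the assignment you describe (default pairs in (a); the shifted left track with the $(b_j,d_j)$ swap at $C_j$ in (b)). Your explicit handling of the residual case $w_2=c_k$ via the local swap $(a_k,c_k)\to(b_k,d_k)$ is the same device the paper uses elsewhere (e.g.\ in the proof of Lemma~\ref{lemma_5_7}) and is implicit in its figure-based argument here.
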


\begin{figure}
\centerline{\includegraphics{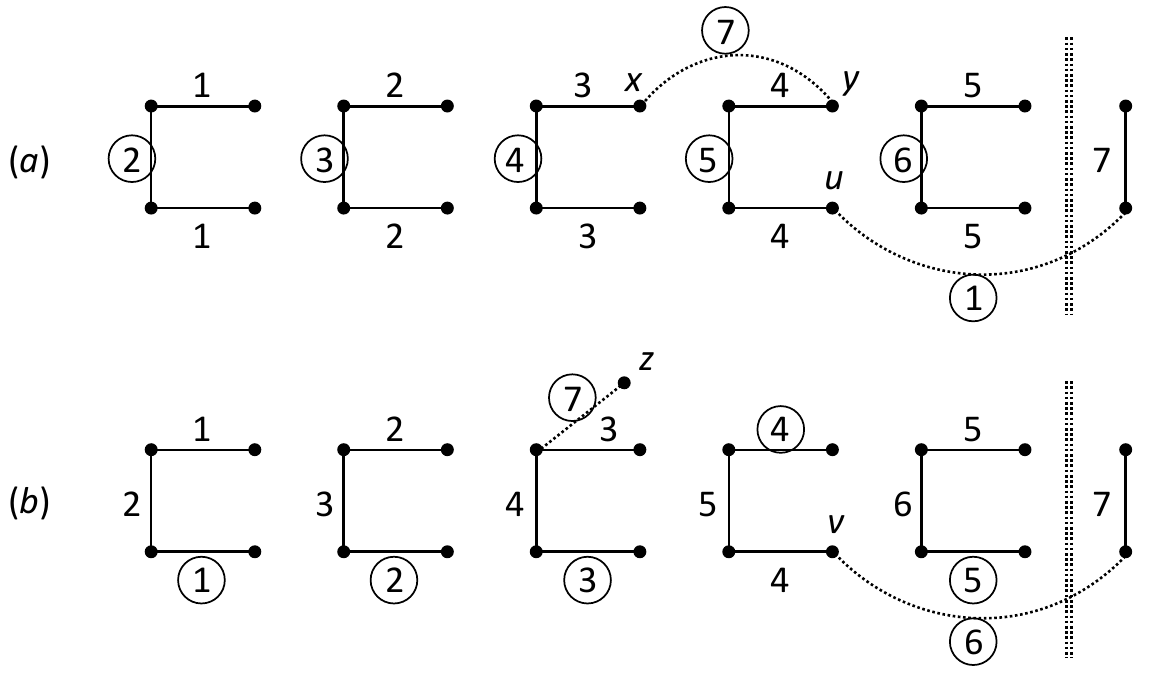}}
\caption{\label{fig_outside}Proof of Lemma~\ref{lemma_outside}. Here, $t=6$ and $i=7$, and in case (\emph{b}), $j=4$.}
\end{figure}

\begin{proof}
In case (\emph{a}), take an element $u\in S_i$. Note that $u\notin\{x,y\}$ by assumption. Proceed as in Figure~\ref{fig_outside}(\emph{a}).

In case (\emph{b}), take an element $v\in T_i$. Note that $v\notin\{z\}\cup C_j$ by assumption. Proceed as in Figure~\ref{fig_outside}(\emph{b}).
\end{proof}

Recall that $|H|\ge n/5$. Fix an index $i\in H$, and consider the set $L_i = K_i\setminus (B'\cup U_i)$. By Lemma~\ref{lemma_outside}(\emph{a}), each element of $L_i$ must be $i$-equivalent to a \emph{different} element of $B\cup S_i$ (by transitivity of $\eqv{i}$). Hence, there are at most two elements of $L_i$ that are $i$-equivalent to elements of $S_i$, and at most four more that are $i$-equivalent to $a_j$ or $b_j$ in an $i$-tainted component $C_j$. All the remaining elements of $L_i$ must be $i$-equivalent to elements of $\Cright$, by Lemma~\ref{lemma_outside}(\emph{b}).

Hence, let us $i$-charge the elements of $K_i\setminus U_i$ to components according to the following charging scheme (which is similar to Charging Scheme 1):

\begin{charging}
Let $i\in H$. Consider an element $x\in K_i\setminus U_i$. If $x\in B'$, then $x$ is $i$-charged to the component it belongs to. Otherwise, if $x$ is  $i$-equivalent to an element of $S_i$ or to $a_j$ or $b_j$ where component $C_j$ is $i$-tainted, then $x$ is not $i$-charged. Otherwise, $x$ must be $i$-equivalent to an element $y\in \Cright$; then we charge $x$ to the component that contains $y$.
\end{charging}

\begin{lemma}\label{lemma_cs3}
In Charging Scheme 3, no component receives more than four $i$-charges.
\end{lemma}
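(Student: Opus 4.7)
My plan is to split the count by the side of $C_k$ and, within that, by whether a charge comes from an own element of $C_k$ or is ``external'' (produced by the third clause of Charging Scheme~3).

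The first step is to observe a structural fact about Scheme~3: the only clause that sends an element $x\notin B'$ to a component is the last one, which routes $x$ to the component of some $y\in\Cright$ with $x\eqv{i}y$. Consequently every external charge lands on a right-side component, and a left-side $C_k$ is $i$-charged only by its four own elements $a_k,b_k,c_k,d_k$, giving the bound of~$4$ in the left-side case immediately.

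The second step handles the right-side case, which is slightly more involved. A right-side $C_k=\{a_k,b_k\}$ contributes at most two ``own'' charges. An external charge comes from some $x\in K_i\setminus(B'\cup U_i)$ with $x\eqv{i}a_k$ or $x\eqv{i}b_k$; since $x\notin B$ and $x\notin S_i$, Lemma~\ref{lemma_outside}(a) forbids two such external $x$'s from sharing an $i$-equivalence class. Because external charges on $C_k$ lie in at most the two classes $[a_k]_i$ and $[b_k]_i$, we obtain at most $2$ external charges, for a total of at most $2+2=4$.

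I do not anticipate a real obstacle; the only point to verify carefully is that the three clauses of Scheme~3 together force the external contributors on a right-side $C_k$ to lie outside $B\cup S_i$, so that Lemma~\ref{lemma_outside}(a) applies cleanly. Edge cases such as $k=i$ or $a_k\eqv{i}b_k$ only collapse the two classes into one and so make the bound tighter, not looser.
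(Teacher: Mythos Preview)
Your argument is correct and essentially identical to the paper's: left-side components are charged only by their own four elements, while for right-side components you use Lemma~\ref{lemma_outside}(a) (exactly the ``above considerations'' the paper invokes) to see that distinct external contributors cannot be $i$-equivalent to the same element of $C_k$, yielding at most two outside charges. The only difference is that you spell out the transitivity step and the containment $K_i\setminus(B'\cup U_i)\subseteq X\setminus(B\cup S_i)$ explicitly, whereas the paper leaves these to the preceding paragraph.
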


\begin{proof}
By the above considerations, left-side components only receive charges from their own elements, and right-side components can receive at most two outside charges.
\end{proof}

We have $|K_i\setminus U_i| \ge |K_i|- 4$, and there are at most six elements of this set that are not charged. Hence, there are at least $n/5 + c - 10$ components that received at least four charges. Out of them, at least $c-10$ are right-side components.

Let us apply this charging for all $i\in H$. By the pigeonhole principle, there must be a ``lucky" right-side component $C_{j^\star}$ that receives four $i$-charges for all $i\in H'$, for some subset $H'\subseteq H$ of size $|H'| = ((c-10)n/5) / (4n/5) = (c-10)/4$. For each index $k\in H'$, let $W_k$ be the set of two elements not in $C_{j^*}$ that were $k$-charged to $C_{j^*}$. Note that each $W_k$ is disjoint from $B'$, since elements of $B'$ were charged to their own components. Furthermore, $W_k$ is disjoint from $U_k$ for each $k$ (since the elements of $U_k$ were not used).

Let $k_1, k_2$ be a pair of distinct indices in $H'$. We would like to choose four distinct elements $w, x, y, z$, with $C_{j^\star} = \{w, x\}$ and $y,z \in W_{k_1} \cup W_{k_2}$, such that $w\eqv{k_1} y$ and $x\eqv{k_2} z$. If such a choice is not possible, then call the pair $k_1$, $k_2$ ``conflicting".

\begin{lemma}\label{lemma_IS}
There exists a subset $H''\subset H'$, of size $|H''|=c/16$, such that no two indices in $H''$ are conflicting.
\end{lemma}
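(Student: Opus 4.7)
The plan is to classify each $k \in H'$ by the way the two elements of $W_k$ distribute between $a_{j^*}$ and $b_{j^*}$. Since $|W_k|=2$ and all $k$-equivalence classes have size at most $3$, each element of $W_k$ is $k$-equivalent to exactly one of $a_{j^*}, b_{j^*}$; call $k$ of type A (both hit), type B (only $a_{j^*}$ hit), or type C (only $b_{j^*}$ hit).

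A direct case analysis of the conflict condition, trying both orderings of $\{w,x\}=\{a_{j^*},b_{j^*}\}$, shows that a pair $(k_1,k_2)$ is conflicting only when both indices are type B, both are type C, or in a pathological sub-case of (A,A) in which $W_{k_1}=W_{k_2}$ as sets but the two $k$-images in $C_{j^*}$ form reversed bijections (so the only distinct $y,z$ fitting either assignment collapses to a single element). Hence, up to the pathologies, the conflict graph on $H'$ is the disjoint union of a clique on the type-B indices and a clique on the type-C indices, with every other vertex isolated.

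The critical step is to argue that the numbers $|B_H|$ and $|C_H|$ of type-B and type-C indices are each bounded by an absolute constant: otherwise we produce two type-B indices $k_1, k_2$ with disjoint $W$-sets (by a pigeonhole over the $O(n)$ possible $2$-element subsets of $X\setminus B'$ once $|B_H|$ is sufficiently large), and complete a full rainbow matching directly, contradicting the inductive no-matching assumption. The construction represents $A_{k_i}$ by $W_{k_i}=\{y_{i,1},y_{i,2}\}$ (valid since $y_{i,1}\eqv{k_i}a_{j^*}\eqv{k_i}y_{i,2}$); keeps $(a_j,b_j)$ for every remaining $j\in\{2,\dots,n\}\setminus\{1,t,k_1,k_2\}$, including $j=j^*$; and applies Lemma~\ref{lemma_2_heavy} to the heavy components $C_{k_1},C_{k_2}$ to obtain pairs $v_1\eqv{1}w_1$ and $v_2\eqv{t}w_2$ with $v_1,v_2\in C_{k_1}\cup C_{k_2}$ and $w_1,w_2\in U_{k_1}\cup U_{k_2}$, serving as the $A_1$- and $A_t$-pairs. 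The ``furthermore'' clause of Lemma~\ref{lemma_2_heavy} provides enough flexibility to steer $w_1,w_2$ away from the at-most-four elements of $W_{k_1}\cup W_{k_2}$, guaranteeing distinctness of all $2n$ chosen elements. The symmetric argument bounds $|C_H|$.

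Once $|B_H|$ and $|C_H|$ are bounded by a constant, $|A_H|\ge|H'|-O(1)=(c-10)/4-O(1)$. Since the pathological (A,A) conflicts form bipartite graphs inside ``pathology groups'' (indices sharing a common $W$-set), at most a factor of $2$ in the independence number is lost, leaving at least $|A_H|/2\ge(c-10)/8-O(1)\ge c/16$ mutually non-conflicting type-A indices, which (together with one representative from each of $B_H$ and $C_H$) yield the required $H''$ for $c\ge 5000$. The main obstacle will be the direct matching construction: managing the possible overlap between the $W$-sets and $U$-sets of the two selected type-B indices requires careful sub-case analysis, and the ``furthermore'' clause of Lemma~\ref{lemma_2_heavy} is the essential tool that makes the distinctness argument go through.
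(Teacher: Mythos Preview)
Your proposal is far more complicated than it needs to be, and the extra complications contain genuine gaps.

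The key point you are missing is that \emph{your types B and C cannot occur}. By Lemma~\ref{lemma_outside}(a) (and this is exactly the reason Lemma~\ref{lemma_cs3} gives the bound of four), no two elements outside $B\cup S_k$ can be $k$-equivalent. Both elements of $W_k$ lie outside $B'\supseteq B$ and outside $U_k\supseteq S_k$; hence if they were both $k$-equivalent to $a_{j^*}$ they would be $k$-equivalent to each other, and we would already be done. So every $k\in H'$ is of your type~A, and the entire machinery you build to bound $|B_H|$ and $|C_H|$ is unnecessary.

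What remains of your argument---the ``pathological'' (A,A) case where $W_{k_1}=W_{k_2}$ as sets with reversed bijections, and the observation that this makes the conflict graph bipartite---\emph{is} the paper's proof. The paper simply notes that the only way $k_1,k_2$ can be conflicting is to have $W_{k_1}=W_{k_2}=\{y,z\}$ with $w\eqv{k_1}y,\ x\eqv{k_1}z,\ w\eqv{k_2}z,\ x\eqv{k_2}y$; hence the conflict graph has no odd cycle, is bipartite, and contains an independent set of size at least $|H'|/2=(c-10)/8\ge c/16$.

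Independently of the above, your treatment of the (nonexistent) types B and C has gaps. Your ``pigeonhole over the $O(n)$ possible $2$-element subsets of $X\setminus B'$'' is not a valid count: $X\setminus B'$ can be arbitrarily large. More seriously, you misread the ``furthermore'' clause of Lemma~\ref{lemma_2_heavy}: it guarantees that for a \emph{given} pair $q,r\in U_i\cup U_j$ one can arrange that exactly one of $q,r$ lies in $\{w_1,w_2\}$; it does \emph{not} let you steer $\{w_1,w_2\}$ away from an arbitrary set of up to four elements of $W_{k_1}\cup W_{k_2}$, which is what your distinctness argument would require.
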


\begin{proof}
The \emph{only} way for $k_1,k_2$ to be conflicting is to have  $C_{j^\star} = \{w, x\}$, $W_{k_1} = W_{k_2} = \{y,z\}$, $w\eqv{k_1} y$, $x\eqv{k_1} z$, $w\eqv{k_2} z$, $x\eqv{k_2} y$. Therefore, if we define an undirected graph having $H'$ as vertex set, and having an edge $k_1k_2$ whenever $k_1, k_2$ are conflicting, this graph cannot have an odd cycle, and therefore it is bipartite, and hence it has an independent set of size at least $|H'|/2 = (c-10)/8$. Out of this independent set, we select an arbitrary subset $H''$ of size $c/16$.
\end{proof}

Let $H''\subset H'$ be as in Lemma~\ref{lemma_IS}, and let $W=\bigcup_{k\in H''} W_k$. We do not have a good handle on the size of $W$; it could be anything in the range $2\le |W|\le 2|H''|$.

Let $k_1, k_2$ be a pair of distinct indices in $H''$. We already know that $k_1$, $k_2$ are not conflicting, and that $W_{k_1} \cap U_{k_1} = W_{k_2} \cap U_{k_2} = \emptyset$. We would also like to have $W_{k_1} \cap U_{k_2} = W_{k_2} \cap U_{k_1} = \emptyset$. If that is the case, call the pair $k_1$, $k_2$ ``compatible".

\begin{lemma}\label{lemma_compatible}
Let $c=5000$. Then there exists a compatible pair of distinct indices $k_1,k_2\in H''$.
\end{lemma}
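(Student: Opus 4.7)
The plan is a double-counting argument: I will bound the number of incompatible unordered pairs in $H''$ linearly in $|H''|$, while the total number of unordered pairs is quadratic, so for $|H''|$ large enough a compatible pair must exist.

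First I will bound the number of ordered triples $(k_1, k_2, x)$ with $k_1, k_2 \in H''$, $k_1 \ne k_2$, and $x \in W_{k_1} \cap U_{k_2}$. Setting $n_W(x) = |\{k \in H'' : x \in W_k\}|$ and $n_U(x) = |\{k \in H'' : x \in U_k\}|$, this count is at most $\sum_x n_W(x)\, n_U(x)$. The key observation is that $n_U(x) \le 2$ for every $x$: the paper already records that the $S_k$'s are pairwise disjoint and that the $T_k$'s are pairwise disjoint, so $x$ lies in at most one $S_k$ and at most one $T_k$, and hence in at most two $U_k = S_k \cup T_k$. On the other hand, $\sum_x n_W(x) = \sum_{k \in H''}|W_k| = 2|H''|$, since each $W_k$ has exactly two elements. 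Combining, the number of such triples is at most $4|H''|$, and therefore the number of ordered pairs $(k_1, k_2)$ with $W_{k_1} \cap U_{k_2} \ne \emptyset$ is at most $4|H''|$ as well.

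Every unordered incompatible pair $\{k_1, k_2\}$ witnesses at least one bad ordered pair (by the definition of compatibility), so the number of incompatible unordered pairs is at most $4|H''|$. Since $|H''| = c/16$ with $c = 5000$, we have $|H''| \ge 312$, and then $\binom{|H''|}{2} > 4|H''|$ (the inequality holds as soon as $|H''| \ge 10$), so some pair in $H''$ is compatible.

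I do not foresee a significant obstacle here: once the bound $n_U(x) \le 2$ is noticed via the disjointness already stated in the paper, the remainder is a routine estimate. In fact, the lemma only requires $|H''|$ to exceed about $10$, so $c = 5000$ is far more than this step needs and must be dictated by earlier parts of the proof.
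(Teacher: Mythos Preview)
Your proof is correct and is in fact cleaner than the paper's. The paper argues via a popularity threshold: it calls an element \emph{popular} if it lies in at least $\sqrt{c}$ of the sets $W_k$, bounds the number of popular elements by $\sqrt{c}/8$, uses the bound $n_U(x)\le 2$ to find some $k_1$ with $U_{k_1}$ free of popular elements, and then counts how many $k_2$ can fail to be compatible with that fixed $k_1$; making both inequalities nonnegative is exactly what forces $c\approx 5000$. Your global double count of triples $(k_1,k_2,x)$ with $x\in W_{k_1}\cap U_{k_2}$ bypasses the threshold entirely, uses the same key fact $n_U(x)\le 2$, and yields the sharp-looking bound of at most $4|H''|$ incompatible unordered pairs, so $|H''|\ge 10$ already suffices. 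One small correction to your closing remark: in the paper the choice $c=5000$ is \emph{not} dictated by earlier steps but precisely by their version of this lemma; your argument shows that this particular step could get by with a much smaller constant.
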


\begin{proof}
Call an element of $W$ ``popular" if it appears in at least $\sqrt c$ sets $W_k$, $k\in H''$. Let $W_{\mathrm p} \subseteq W$ be the set of popular elements; we have $|W_{\mathrm p}| \le 2|H''|/\sqrt c \le \sqrt c/8$.

Since each element can appear in at most two sets $U_k$ (at most one $S_k$ and at most one $T_k$), there are at least 
\begin{equation*}
|H''| - 2|W_{\mathrm p}|\ge c/16 - \sqrt c/4\ge 1
\end{equation*}
indices $k_1 \in H''$ for which $U_{k_1}$ contains no popular element.

Pick one such index $k_1$. The elements of $U_{k_1}$ appear in at most $4\sqrt c$ sets $W_{k_2}$, and the elements of $W_{k_1}$ appear in at most $4$ other sets $U_{k_2}$. That leaves us with at least
\begin{equation*}
c/16 - 4\sqrt c-4 \ge 1
\end{equation*}
choices for $k_2\in H''$ that make the pair $k_1$, $k_2$ compatible.
\end{proof}

We are almost done:

\begin{lemma}\label{lemma_jstar}
If there exist two distinct elements $x\eqv{j^\star} y$, both outside 
\begin{equation*}
Y=\Cright \cup W\cup \bigl(\bigcup_{k\in H''} U_k \bigr),
\end{equation*}
then we are done.
\end{lemma}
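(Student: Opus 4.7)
The plan is to exhibit a rainbow matching by performing five coordinated substitutions into the standing matching $\{a_i \eqv{i} b_i : 2 \le i \le n\}$ for a suitable compatible pair $k_1, k_2 \in H''$: the pair $\{x, y\}$ will represent $A_{j^\star}$; two pairs drawn from $C_{j^\star}$ and $W_{k_1} \cup W_{k_2}$ will represent $A_{k_1}$ and $A_{k_2}$; and two pairs drawn from $C_{k_1} \cup C_{k_2}$ and $U_{k_1} \cup U_{k_2}$ will represent $A_1$ and $A_t$. For every other index $i$ I keep the original pair $\{a_i, b_i\}$.

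Concretely, I first invoke Lemma~\ref{lemma_compatible} to pick a compatible pair $k_1, k_2 \in H''$. Since $H'' \subseteq H'$ avoids conflicting pairs (Lemma~\ref{lemma_IS}), I obtain distinct $p, q$ with $\{p, q\} = C_{j^\star}$ and distinct $w_1, w_2 \in W_{k_1} \cup W_{k_2}$ such that $p \eqv{k_1} w_1$ and $q \eqv{k_2} w_2$. Next, I apply Lemma~\ref{lemma_2_heavy} to $C_{k_1}, C_{k_2}$, obtaining four distinct elements $v_1, v_2 \in C_{k_1} \cup C_{k_2}$ and $w_1', w_2' \in U_{k_1} \cup U_{k_2}$ with $v_1 \eqv{1} w_1'$ and $v_2 \eqv{t} w_2'$. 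The ``furthermore'' clause of Lemma~\ref{lemma_2_heavy} is kept in reserve as slack to force or exclude a specific element of $U_{k_1} \cup U_{k_2}$ from $\{w_1', w_2'\}$ should a lingering clash require it.

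Pairwise distinctness of the $2n$ elements then follows by disjointness bookkeeping: (i)~the components $C_{k_1}, C_{k_2}, C_{j^\star}$ and the remaining $C_i$'s are pairwise disjoint; (ii)~$\bigcup_{k \in H''} U_k$ and $W$ are each disjoint from $B$, since in Charging Schemes~2 and~3 every $B$-element is charged to its own component; (iii)~$W_{k_1} \cup W_{k_2}$ and $U_{k_1} \cup U_{k_2}$ are disjoint by compatibility of $k_1, k_2$; and (iv)~the hypothesis $x, y \notin Y$ keeps $x, y$ out of $\Cright$, $W$, and $U_{k_1} \cup U_{k_2}$. These four facts imply that the ten ``new'' elements are pairwise distinct and disjoint from the right-side standard elements; disjointness from the remaining left-side standard elements then reduces to a single subtlety described next, since $p, q, v_1, v_2$ live in excluded components while $w_1, w_2, w_1', w_2'$ lie outside $B$.

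The main obstacle I anticipate is that $x$ or $y$ could still coincide with a left-side $a_i$ or $b_i$ for some $2 \le i \le t-1$, a case not formally excluded by $x, y \notin Y$ yet which would collide with the retained standard pair for that $A_i$. I plan to handle it by a local cascade along the chain of left-side components: if $x = a_i$, re-represent $A_j$ by $\{a_{j+1}, c_{j+1}\}$ (using $a_{j+1} \eqv{j} c_{j+1}$) for $j = i, i+1, \ldots, t-1$, which frees $a_i$ for the new $A_{j^\star}$ pair while still covering each intermediate $A_j$. The cascade terminates cleanly because $C_t$ is already freed by the reassignment of $A_t$ to $\{v_2, w_2'\}$; this is where I expect the ``furthermore'' clause of Lemma~\ref{lemma_2_heavy} to earn its keep, by steering $w_1', w_2'$ away from any $c_{j+1}$ that the cascade pulls into service.
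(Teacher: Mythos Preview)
Your five-substitution plan and the use of the compatible pair $k_1,k_2\in H''$ together with Lemma~\ref{lemma_2_heavy} is exactly what the paper does in one of its two cases. However, you have underestimated the obstacle. The paper notes that complications arise only when \emph{both} $x$ and $y$ lie in $\Cleft$; when at least one of them is outside $B'$ the substitutions go through without any cascade. Your ``main obstacle'' paragraph treats only the situation where a single one of $x,y$ equals some $a_i$ or $b_i$, and your cascade (re-represent $A_j$ by $\{a_{j+1},c_{j+1}\}$ for $j=i,\ldots,t-1$) breaks down precisely in the hard case: if $x=a_i$ and $y=a_j$ with $i<j$, your top-side cascade from $i$ reaches $\{a_j,c_j\}$ at step $j-1$ and collides with $y$. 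Switching sides there entangles you with the bottom elements $d_m$, which may themselves equal $w_1'$ or $w_2'$; the ``furthermore'' clause cannot steer $\{w_1',w_2'\}$ away from an entire run of $c_m$'s or $d_m$'s---it only lets you ensure that $\{w_1',w_2'\}\neq\{c_m,d_m\}$ for one fixed $m$, after which a single local top/bottom swap suffices.

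The paper therefore splits into two cases. Case~(a): $x,y$ lie in the \emph{same} left component; here your plan with the compatible pair $k_1,k_2$ and Lemma~\ref{lemma_2_heavy} works, and the ``furthermore'' clause is invoked exactly once, to rule out $\{w_1',w_2'\}=\{c_j,d_j\}$ for some $j$. Case~(b): $x,y$ lie in \emph{different} left components $C_i,C_j$; after a symmetry one may assume $x\in\{a_i,c_i\}$ and $y\in\{a_j,c_j\}$. Here the paper abandons the compatible pair entirely and uses a \emph{single} heavy index $k\in H''$ chosen so that $d_i,d_j\notin T_k$ (possible because $|H''|\ge 3$ and each element lies in at most one $T_k$), which lets two bottom-side reroutings through $d_i$ and $d_j$ run without hitting the $t$-edge drawn from $C_k$. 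Your proposal is missing this second construction.
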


\begin{proof}
Complications arise only when $\{x,y\} \subset \Cleft$.

Suppose first that $x$ and $y$ belong to the same left-side component. By Lemma~\ref{lemma_compatible}, let $k_1, k_2 \in H''$ be a compatible pair of distinct indices. Invoke Lemma~\ref{lemma_2_heavy} with $i=k_1$, $j=k_2$. We obtain four distinct elements $v_1\eqv{1} w_1$, $v_2\eqv{t} w_2$, with $v_1, v_2 \in C_{k_1} \cup C_{k_2}$ with $w_1, w_2 \in U_{k_1} \cup U_{k_2}$. If we are unlucky and $\{w_1, w_2\} = \{c_j, d_j\}$ for some index $j$, then we invoke the ``furthermore'' clause of Lemma~\ref{lemma_2_heavy} with $\{q,r\} = \{w_1,w_2\}$, and we proceed as in Figure~\ref{fig_jstar}(\emph{a}).

\begin{figure}
\centerline{\includegraphics{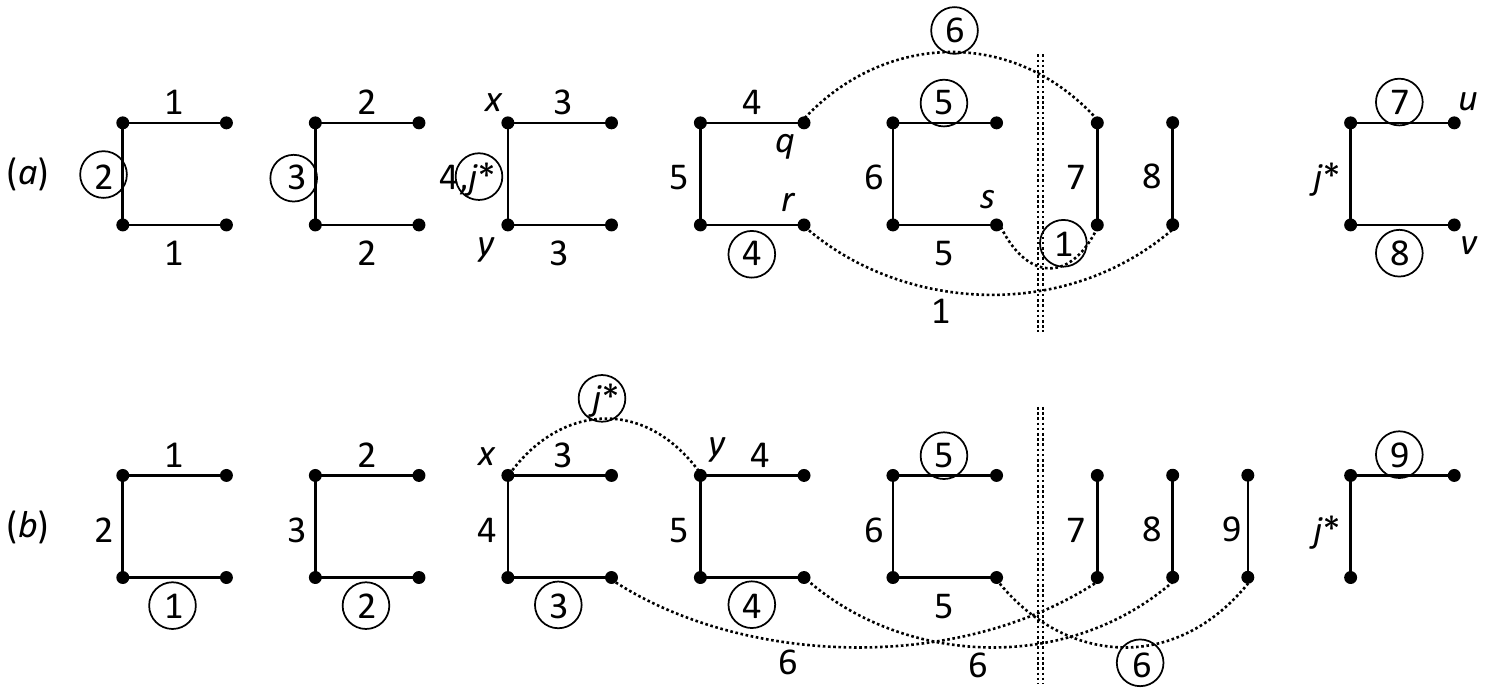}}
\caption{\label{fig_jstar}Proof of Lemma~\ref{lemma_jstar}. Here $t=6$. In case (\emph{a}) we have $\{k_1,k_2\} = \{7,8\}$. Since the indices $k_1,k_2$ are nonconflicting and compatible, the elements $q,s,u,v$ are all distinct. In case (\emph{b}) we have $H'' \supset \{7,8,9\}$ and $k=9$.}
\end{figure}

Now suppose $x$ and $y$ belong to different left-side components $C_i$ and $C_j$. Suppose without loss of generality that $x\in\{a_i,c_i\}$ and $y\in\{a_j,c_j\}$. Then we take a $k\in H''$ such that $d_i, d_j \notin T_k$. Such a $k$ must exist, since $|H''|\ge 3$. Then we proceed as in Figure~\ref{fig_jstar}(\emph{b}).
\end{proof}

But a pair $x$, $y$ as in Lemma~\ref{lemma_jstar} must exist, since otherwise, every element of $K_{j^\star}$ would either belong to $Y$ or be $j^\star$-equivalent to a different element of $Y$. Observe that $|Y| \le 2(4/5)n + c/8 + c/4$. That accounts for only $2|Y| \le (3+1/5)n + 3c/4$ elements of $K_{j^\star}$, which is not enough. This concludes the proof of Theorem~\ref{thm_bd}. \hfill $\qed$

\begin{remark}
The main difference between Charging Scheme 2 on the one hand, and Charging Schemes 1 and 3 on the other hand, is the way they handle the elements of $B' \setminus B$. In Charging Schemes 1 and 3 these elements are automatically charged to the component they belong to, whereas in Charging Scheme 2 they are not. If we modified Charging Scheme 3 to be like Charging Scheme 2 in this respect, then no left-side component would receive more than two $i$-charges, improving Lemma~\ref{lemma_cs3}. However, we then run into trouble since the sets $W_k$ might intersect $B'\setminus B$.
%If this could be made to work, it would lead to a small constant-factor improvement in Theorem~\ref{thm_bd}, to $(3+1/5 - 1/4000)n$ perhaps.
\end{remark}

\paragraph{Acknowledgements} Thanks to L. \v S. (Yehuda) Grinblat for suggesting us to look at this problem and for helpful discussions. Special thanks to the referees for reading the paper carefully and providing detailed suggestions. Thanks also to Anat Paskin-Cherniavsky for helpful discussions.

\end{document}